\theoremstyle{definition}
\newtheorem{problem}{Problem}
\newtheorem{definition}{Definition}
\newtheorem{proposition}{Proposition}
\newtheorem{exmp}{Example}
\newtheorem*{rem}{Remark}
\newcommand{\ggr}{$\textsc{Grid-Greedy}$}
\newcommand{\igr}{$\textsc{Centroid-Greedy}$}
\title{\LARGE \bf
An Improved Greedy Algorithm for Subset Selection \\ in Linear Estimation}
\author{Shamak Dutta, Nils Wilde, and Stephen L. Smith
\thanks{The authors are with the Department of Electrical and Computer Engineering,
        University of Waterloo, Canada 
        \{{\tt\small stephen.smith, nwilde, shamak.dutta\}@uwaterloo.ca}.  N.\ Wilde is also with the Cognitive Robotics Department, Delft University of Technology, Netherlands.}%
}
\begin{document}
\maketitle
\thispagestyle{empty}
\pagestyle{empty}

\begin{abstract}
In this paper, we consider a subset selection problem in a spatial field where we seek to find a set of $k$ locations whose observations provide the best estimate of the field value at a finite set of prediction locations. The measurements can be taken at any location in the continuous field, and the covariance between the field values at different points is given by the widely used squared exponential covariance function.
 One approach for observation selection is to perform a grid discretization of the space and obtain an approximate solution using the greedy algorithm. The solution quality improves with a finer grid resolution but at the cost of increased computation. We propose a method to reduce the computational complexity, or conversely to increase solution quality, of the greedy algorithm by considering a search space consisting only of prediction locations and centroids of cliques formed by the prediction locations. We demonstrate the effectiveness of our proposed approach in simulation, both in terms of solution quality and runtime.
\end{abstract}

\section{Introduction}
An important problem in engineering applications is deciding the subset of measurements that are the most useful in the estimation of an unknown quantity of interest. For example, in agriculture, it is important to estimate the nutrient quality of a field using soil samples. This helps guide fertilizer usage to replenish lost nutrients, which subsequently maximizes crop yield. However, it is impractical to sample the soil at each location in large agricultural fields. The goal is to determine where to sample the soil, such that the nutrient quality at a large set of prediction locations can be estimated accurately. An example of the soil pH variability in a field with a set of prediction locations is shown in Figure~\ref{fig:intro}. This type of subset selection problem shows up in domains such as sensor placement/active sampling in spatial statistics \cite{krause2008near,ramakrishnan2005gaussian,le2009trajectory,marchant2012bayesian,yang2017real}, feature selection in machine learning \cite{miller2002subset,guyon2003introduction}, informative path planning in robotics \cite{binney2012branch,binney2010informative,binney2013optimizing,suryan2020learning}, among others. The challenge is similar: choose the subset of attributes that best estimates the quantity of interest.
\begin{figure}
    \centering
    \includegraphics[width=0.45\textwidth]{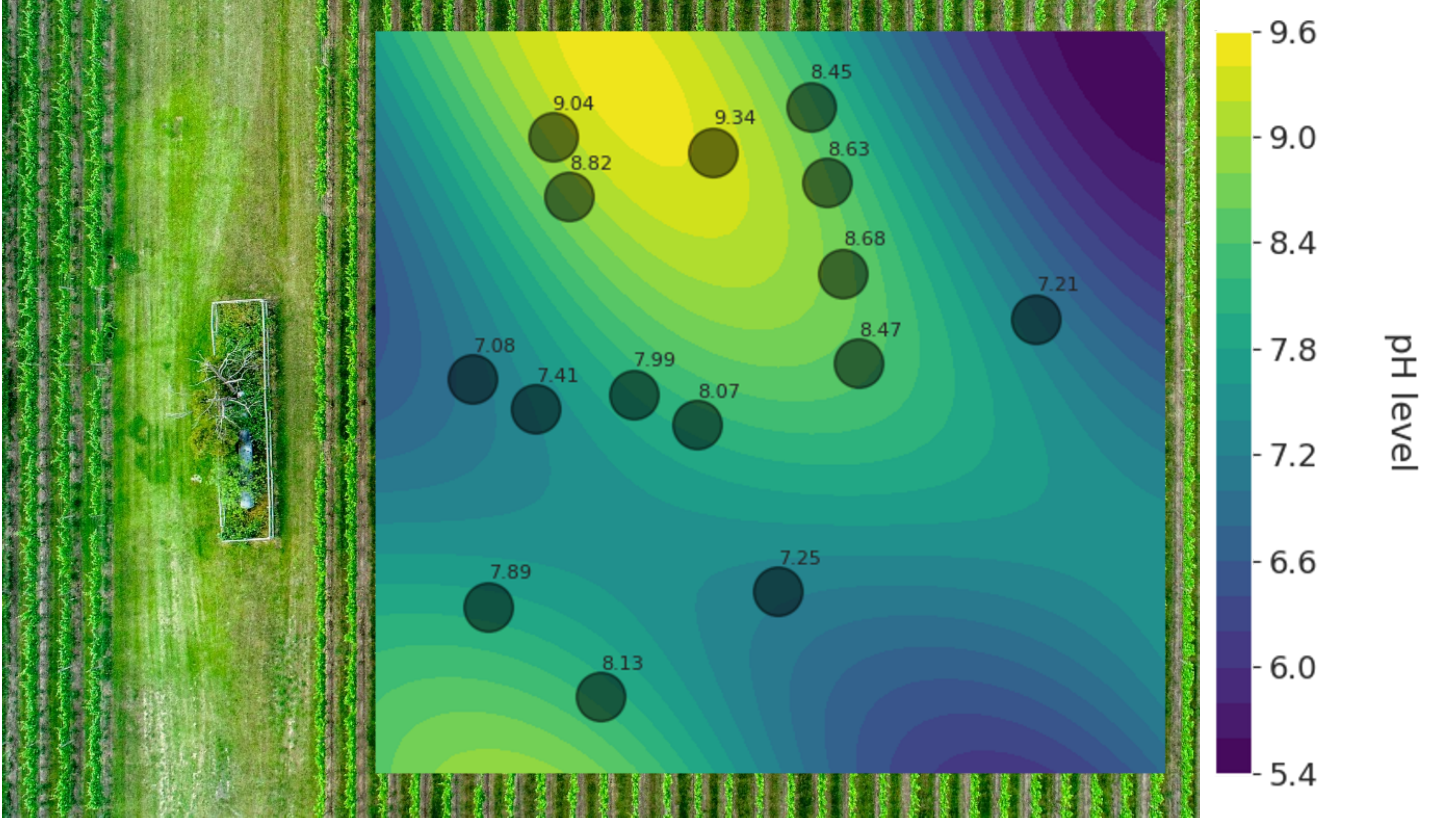}
    \caption{An example of the pH variability in an agricultural field. The circles are the prediction locations where accurate estimates are desired. Agricultural fields can be large and one can only take a fixed number of soil samples to best estimate the pH variability at the prediction locations.}
    \label{fig:intro}
\end{figure}

The Bayesian approach is to model the quantities as random variables. The estimates of prediction variables are obtained by linear estimators and the quality of the chosen subset is measured by the resulting mean squared estimation error. The benefits of this approach are twofold. First, prior statistical knowledge of the quantities can be incorporated into the estimation procedure. Second, the mean squared error resulting from a linear estimator is independent of the observations. Thus, deciding the subset that minimizes the mean squared error can be done \textit{a priori}. With a finite observation set, a popular approach is to use the greedy algorithm \cite{miller2002subset,hastie01statisticallearning}. Each step, the variable maximizing the marginal gain is selected. Continuous observation sets, such as agricultural fields, can be made finite by a grid discretization, which can be used by the greedy algorithm, which we refer to as \ggr. The solution quality improves with a finer grid but at an increased computational cost. The objective of this paper is to remove the dependence on the grid discretization while obtaining good solution quality. Our proposed method, \igr, restricts the search to the set of prediction locations and the centroids of the cliques formed by the prediction locations. This is motivated by our analysis of the problem in one dimension where we identify a critical distance between points that characterize the optimal measurement location. In our experiments, we show \igr\ achieves better solutions when given the same computational resources as \ggr\ and finds solutions of similar quality more efficiently.

\emph{Related Work:}
Similar to our work, \cite{suryan2020learning} study 
the problem of minimizing the number of measurements taken while keeping the estimation error under a threshold in a spatial field modeled as a Gaussian Process with a squared exponential covariance function. In contrast, we minimize the estimation error over a finite set of prediction variables using $k$ observation variables. In addition, the variables are not restricted to be Gaussian in our problem setup.

A seminal paper in the area of sensor placement in Gaussian Processes is \cite{krause2008near} where the authors consider maximizing the mutual information between the sensed and unsensed locations. Using the submodularity of mutual information, the authors use a greedy algorithm to obtain a constant factor approximation guarantee. However, to the best of our knowledge, there is no direct relation between the mutual information and the resulting estimation error. Our work considers the estimation error directly. In \cite{cortes2009distributed}, the author studies the problem of estimation using kriged Kalman filtering in a spatio-temporal field. In our work, we consider spatial variation only and share the kriging aspect with the work in \cite{cortes2009distributed}. 

Another related problem is the subset selection problem in linear regression, where one has to select a subset of $k$ random variables to yield the best prediction of another random variable of interest. The authors in \cite{das2008algorithms} study a special case where the random variables can be embedded onto the real line with covariances that decay with the distance. They provide an algorithm to compute the optimal solution using dynamic programming. However, their work assumes a finite set of measurement variables, which must be embedded onto the real line. There is no known extension of this idea to an infinite set of observation variables embedded in higher dimensional spaces. In this paper, we consider random variables indexed on an infinite subset of a $d$-dimensional Euclidean space with the widely used squared exponential covariance function \cite{rasmussen2003gaussian}. In \cite{das2018approximate}, the authors introduce the concept of approximate submodularity to provide guarantees for the greedy algorithm. The setup is the same as in \cite{das2008algorithms} with a finite set of observation variables. Thus, the performance guarantees are not directly applicable.

Finally, one can also solve the problem using global optimization techniques. In the context of feature subset selection, \cite{narendra1977branch,somol2004fast} study branch and bounds methods to find the optimal subset of features. Building on this idea, the work in \cite{binney2012branch} studies the problem for informative path planning where the goal is to compute a cost-constrained path in a Gaussian Process which minimizes the prediction error. While the algorithm does provide a speedup compared to the brute-force method on small graphs, the worst-case runtime complexity is still exponential in the size of the input. 

\emph{Contributions:} 
The contributions of this work are twofold. First, we formulate a problem of budget constrained observation selection from an infinite set to best estimate a finite set of prediction variables. Second, we propose \igr, a greedy algorithm that uses a ground set consisting of the prediction locations and the centroids of cliques formed by the prediction locations. This reduces the dependence of \ggr\ on the grid discretization of the continuous field. In simulations, we demonstrate the improved solution quality and run time of \igr\ in comparison to \ggr.

\section{Preliminaries} \label{sec:prelim}
Many combinatorial problems involve maximizing a submodular set function whose definition follows.
\begin{definition}[Submodular Set Function] \label{def:submodular}
Given a finite set $\mathcal{V}$, the set function $f: 2^\mathcal{V} \rightarrow \mathbb{R}$ is submodular if for all sets $A \subseteq B \subseteq \mathcal{V}$ and $x \in \mathcal{V} \setminus B$, the diminishing returns property is satisfied:
\begin{equation}
    f(A \cup \{x\}) - f(A) \geq f(B \cup \{x\}) - f(B).
\end{equation}
The greedy algorithm for maximizing set functions subject to a cardinality constraint is described as follows.
\begin{definition}[Greedy Algorithm] \label{def:greedyalg}
 The greedy algorithm begins with the empty set $S_0 = \emptyset$ and repeatedly adds the element $x \in V$ that maximizes the marginal gain until the cardinality constraint is met. That is, for $i \geq 1$,
\begin{equation} \label{background:greedy}
    S_{i+1} = S_{i} \cup \{\underset{x \in V}{\text{arg max}}\ f(S_i \cup \{x\}) - f(S_i)\}.
\end{equation}
\end{definition}
The cardinality constrained maximization of a certain class of submodular functions can be efficiently approximated by a greedy algorithm whose solution is within a multiplicative factor of $1-1/e \approx 0.63$ of the optimal \cite{nemhauser1978analysis}.

Let $X_1, \ldots, X_n, Y$ be square integrable, zero mean random variables, and $\boldsymbol{b} := \left(\text{Cov}(X_1, Y), \ldots, \text{Cov}(X_n, Y)\right)^T$, $\boldsymbol{X} := \left( X_1, \ldots, X_n\right)^T$, and let $C$ be a $n \times n$ matrix whose $(i,j)^{\text{th}}$ element is $\text{Cov}(X_i, X_j)$.
\end{definition}
\begin{definition}[Linear Least Squares Estimator] \label{def:llse}
Given $X_1, \ldots, X_n$, the optimal linear estimator of $Y$ is:
\begin{equation}
\hat{Y} := \boldsymbol{b}^T C^{-1} \boldsymbol{X}.
\end{equation}
\end{definition}

\begin{definition}[Mean Squared Estimation Error] \label{def:mse}
Given $X_1, \ldots, X_n$, the linear least squares estimator of $Y$ results in a mean squared estimation error of
\begin{equation}
\mathbb{E}\left[\left(Y - \hat{Y}\right)^2\right] := \text{Var}(Y) - \boldsymbol{b}^T C^{-1} \boldsymbol{b}.
\end{equation}
\end{definition}

\section{Problem Formulation} \label{sec:prob_form}
 Let $D \subset \mathbb{R}^d$ represent the environment and let $\sigma_0 \in \mathbb{R}_{> 0}$ be a positive real number. For any location $x \in D$, let $Z(x)$ be a random variable with zero mean and variance $\sigma_0^2$. We consider a convex set of measurement locations $\Theta \subset D$ and a finite set of prediction locations $\Omega \subset \Theta$. Given a positive integer $k \in \mathbb{Z}_{+}$, our goal is to minimize the mean-squared error of the linear estimation of the prediction variables $\left\{ Z(x): x \in \Omega\right\}$ using only $k$ measurement variables.
 
 \begin{rem}
 Note that when $|\Omega| < k$, measurements at all prediction locations will yield low estimation error. The problem is only interesting when $|\Omega| > k$. 
 \end{rem}

We define $\phi_\text{SE}: \mathbb{R}_{\geq 0} \rightarrow \mathbb{R}_{> 0}$ to be the squared exponential covariance function with known parameters $\sigma_0$ and $L \in \mathbb{R}_{>0}$:
\begin{equation} \label{eqn:exp_cov}
    \phi_\text{SE}(x) = \sigma_0^2 e^{-\frac{x^2}{2L^2}},
\end{equation}
The parameters can be learned from a pilot deployment or expert knowledge and is a standard assumption in sensor placement algorithms \cite{krause2008near}.

For any $x, y \in D$, we assume the covariance of the random variables $Z(x), Z(y)$ is given by
\begin{equation}
    \text{Cov} (Z(x), Z(y)) = \mathbb{E}[Z(x)Z(y)] = \phi_{\text{SE}}(\lVert x - y \rVert).
\end{equation}
Let $i$ be a positive integer. For any $x \in D$, let $Y_i(x)$ be the $i^{\text{th}}$ noisy measurement of $Z(x)$ and let the associated noise be $\epsilon_i(x)$. The noise is assumed to be a zero mean random variable with variance $\sigma^2 > 0$. In addition, the noise is uncorrelated across measurements and locations i.e.\ for any $x,y \in D$ and for any positive integers $m, n \in \mathbb{Z}_+$, $\text{Cov}(\epsilon_m(x), \epsilon_n(y)) = 0$. The measurement is
\begin{equation}
  Y_i(x) = Z(x) + \epsilon_i(x).
\end{equation}

In order to reduce notational clutter, for any $x \in D$, we drop the subscript $i$ from the measurement variable $Y_i(x)$ and the associated noise $\epsilon_i(x)$. We proceed with the understanding that if there are multiple measurements at the same location, the associated noise terms are uncorrelated. In addition, any measurement at the same location $x \in D$ (even if there are multiple) will be denoted by $Y(x)$. Now, the measurement equation is
\begin{equation}
  Y(x) = Z(x) + \epsilon(x).
\end{equation}

We wish to minimize the total mean-squared error, which gives us the following constrained optimization problem:
\begin{equation} \label{eqn:initial_mse}
  \underset{S \subset \Theta, |S| \leq k}{\min}\ \underset{x \in \Omega}{\sum}\ \mathbb{E} \left[\left(Z(x) - \hat{Z}(x,S)\right)^2\right],
\end{equation}
where $\hat{Z}(x,S)$ is the linear estimator of $Z(x)$ given the variables in $S$. We will now rewrite the problem using the definition of the mean squared error. Denote the elements of a set $S$ by $\{x_1, \ldots, x_k\}$. The linear estimator $\hat{Z}(x,S)$ is given by Definition \ref{def:llse}:
\begin{equation}
    \hat{Z}(x,S) := \boldsymbol{b}_x(S)^T C(S)^{-1} \boldsymbol{Y}_S
\end{equation}
where
\begin{equation}  \label{eqn:cov_defns}
    \begin{split}
        \boldsymbol{b}_x(S) &:= [\phi_\text{SE}(\lVert x-x_1 \rVert), \ldots, \phi_\text{SE}(\lVert x-x_k \rVert)] \in \mathbb{R}^k\\
        \boldsymbol{Y}_S &:= [Y(x_1), \ldots, Y(x_k)] \in \mathbb{R}^k\\
        C(S) &:= \mathbb{E} \left[ \boldsymbol{Z}_S \boldsymbol{Z}_S^T\right] + \sigma^2 I_k \in \mathbb{R}^{k \times k}\\
        &= \begin{bmatrix}
        \phi_\text{SE}(0) & \dots & \phi_\text{SE}(\lVert x_1 - x_k \rVert)\\
        \vdots & \ddots & \vdots\\
        \phi_\text{SE}(\lVert x_k - x_1 \rVert) & \dots & \phi_\text{SE}(0)
        \end{bmatrix} \\
        &+ \sigma^2 I_k.
    \end{split}
\end{equation}

Using Definition \ref{def:mse}, we can rewrite Equation (\ref{eqn:initial_mse}) as
\begin{equation}
  \underset{S \subset \Theta, |S| \leq k}{\min}\ \underset{x \in \Omega}{\sum}\ \phi_\text{SE}(0) - \boldsymbol{b}_x(S)^T C(S)^{-1} \boldsymbol{b}_x(S).
\end{equation}

Since $\phi_\text{SE}(0) = \sigma_0^2$ is a constant, we can consider the maximization version of the problem. Define
\begin{equation} \label{eqn:obj_defns}
    \begin{split}
        f_x(S) &:= \boldsymbol{b}_x(S)^T C(S)^{-1} \boldsymbol{b}_x(S)\\
        f(S) &:= \sum_{x \in \Omega} f_x(S),
    \end{split}
\end{equation}
where $\boldsymbol{b}_x(S)$ and $C(S)$ are defined in (\ref{eqn:cov_defns}). The function $f_x(S)$ is also known as the \textit{squared multiple correlation} \cite{miller2002subset,das2008algorithms} or the \textit{variance reduction} \cite{krause2008robust}.

In this paper, we wish to find the measurement set that maximizes the total variance reduction. This is formulated as the following optimization problem.
\begin{problem}
Given measurement locations $\Theta$, prediction locations $\Omega$, and a budget $k > 0$, find a measurement set $S \subset \Theta$ of size $k$ that maximizes the total variance reduction:
\begin{equation} \label{eqn:opt_prob}
  \underset{S \subset \Theta, |S| \leq k}{\max}\ f(S).
\end{equation}
\end{problem}

\section{Problem Structure} \label{sec:prob_struct}

In this section we provide preliminary results that guide the design of our algorithm, presented in the next section.

\subsection{Non-submodularity} \label{subsec:nonsubmod}

Problem 1 resembles a sensor placement problem where one is interested in a subset of locations to deploy sensors to maximize the information gained about the environment. Metrics related to the information gained such as coverage and mutual information are known to be submodular functions which can be approximately solved efficiently with a guarantee. However, for Problem 1, we provide an example to show the variance reduction objective is \textit{not submodular}. 
\begin{exmp}
Consider the following environment setup where the points lie on an interval on the real line.
\begin{equation}
\begin{split}
    D \subset \mathbb{R}, \Omega = \{0\}, \Theta = [0, 2], \sigma = 1, \sigma_0 = 1, L = 1, \\
    A = \{0.6784\}, B = \{0.6784, 1.4869\}, x = 0.6892.
\end{split}
\end{equation}
Now, $f(A \cup \{x\}) - f(A) = 0.1021$ and $f(B \cup \{x\}) - f(B) = 0.1025$, which shows the violation.
\hfill\qedsymbol
\end{exmp}

\subsection{Two Prediction Locations with One Sample}
In this subsection, we discuss properties of the problem in 1-D, i.e.,\ the random variables are associated with locations on the real line. This restriction provides valuable insight into the problem and motivates our proposed algorithm.

Suppose the set of prediction locations contains two points i.e.\ $\Omega = \{y_1, y_2 \} \subset [a,b]$, with $y_1 < y_2$. After some simplification, the optimization problem in (\ref{eqn:opt_prob}) is
\begin{equation} \label{eqn:2test_loc_1sample}
\begin{split}
  &\underset{x \in [a,b]}{\max}\ \frac{1}{\sigma_0^2 + \sigma^2} \left(\phi_\text{SE}^2(\lVert x-y_1 \rVert) + \phi_\text{SE}^2(\lVert x-y_2 \rVert)\right) \\
  = &\underset{x \in [a,b]}{\max}\ \frac{\sigma_0^4}{\sigma_0^2 + \sigma^2} \left(e^{-\frac{1}{L^2} \lVert x-y_1 \rVert^2} + e^{-\frac{1}{L^2} \lVert x-y_2 \rVert^2}\right).
 \end{split}
\end{equation}
The solution depends on the relationship between the distance between the two prediction locations and $L$, the parameter of the squared exponential covariance function defined in \eqref{eqn:exp_cov}. This is formalized in the following proposition.

\begin{figure}
    \centering
    \includegraphics[width=0.48\textwidth]{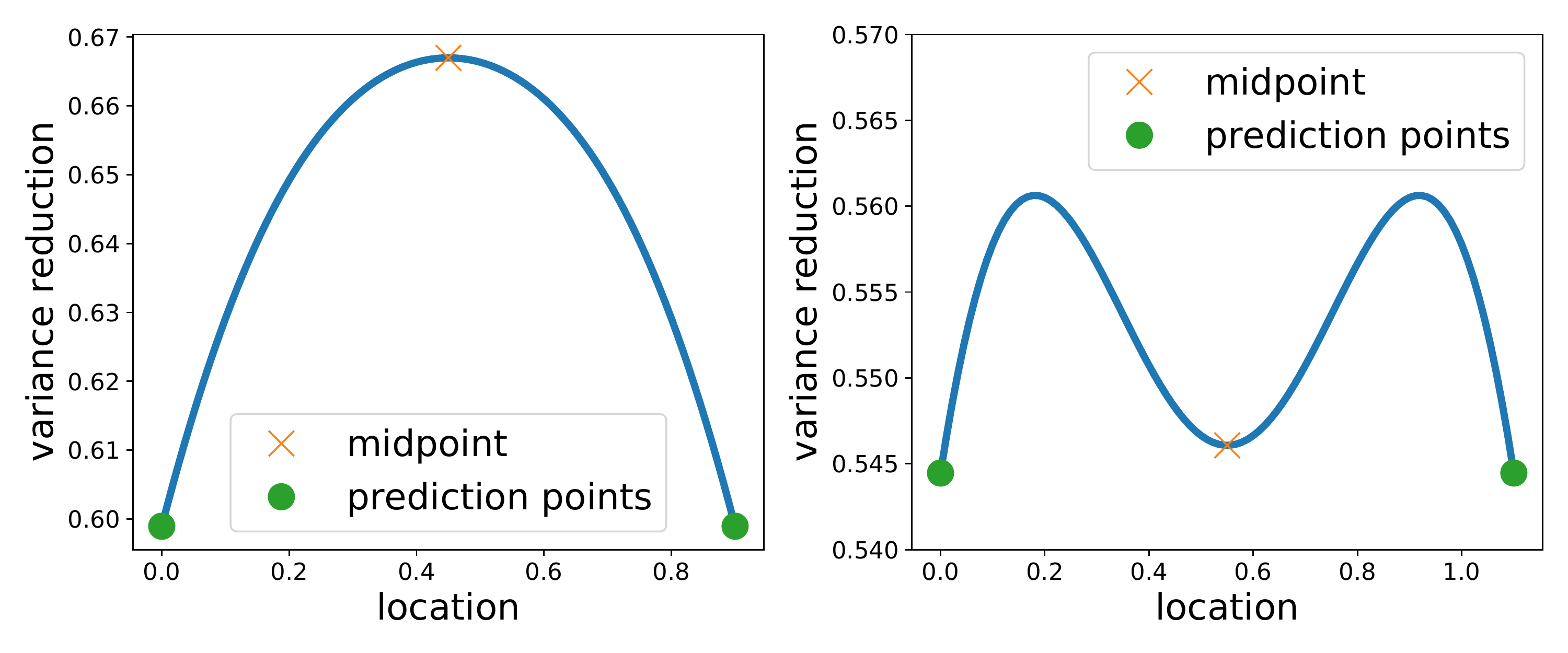}
    \caption{Left: An example of the objective function defined on the interval $[0,1]$. The test locations are located at $y_1 = 0.0, y_2 = 0.9$ and $L = \frac{1}{\sqrt{2}}$. In this setting, the midpoint achieves the global maximum. Right: An example of the objective function defined on the interval $[0,1.1]$. The test locations are located at $y_1 = 0.0, y_2 = 1.1$ and $L = \frac{1}{\sqrt{2}}$. In this setting, the midpoint is a local minima.}
    \label{fig:optwL}
\end{figure}

\begin{proposition} \label{prop:1sample_2loc}
    Let $D \subset \mathbb{R}$, $\Omega = \{y_1, y_2\} \subset D, \Theta = D$, $t=1$, and the midpoint $x^* = \frac{y_1 + y_2}{2}$. Denote the optimal solution to (\ref{eqn:2test_loc_1sample}) by $\textsc{OPT}$. Then,
    \begin{equation}
    \textsc{OPT} = x^* \iff \lVert y_2 - y_1 \rVert \leq \sqrt{2} L.
    \end{equation}
\end{proposition}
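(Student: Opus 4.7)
Translate the coordinate so that the prediction locations lie at $\pm d/2$, where $d := \lVert y_2 - y_1 \rVert$; the midpoint becomes $x^* = 0$. Up to a positive multiplicative constant, the objective in (\ref{eqn:2test_loc_1sample}) equals
\begin{equation*}
  h(x) = e^{-(x+d/2)^2/L^2} + e^{-(x-d/2)^2/L^2} = 2 e^{-d^2/(4L^2)} e^{-x^2/L^2} \cosh(xd/L^2),
\end{equation*}
using the identity $e^a + e^b = 2 e^{(a+b)/2} \cosh((a-b)/2)$. Since $h$ is even, strictly positive, and vanishes at infinity, its maximum over $\mathbb{R}$ (equivalently over any interval containing $[y_1, y_2]$) is attained at an interior critical point, and maximizing $h$ is equivalent to maximizing $k(x) := \log h(x)$.

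A direct differentiation gives $k'(x) = -2x/L^2 + (d/L^2)\tanh(xd/L^2)$, so critical points satisfy $2x = d\tanh(xd/L^2)$. With the substitution $u = xd/L^2$ and $\alpha = d^2/(2L^2)$, this reduces to the one-parameter fixed-point equation $u = \alpha \tanh u$. The plan is to use that $\tanh$ is odd, strictly concave on $(0, \infty)$, and satisfies $\tanh 0 = 0$, $\tanh'(0) = 1$: then $\alpha \tanh u < u$ for all $u > 0$ whenever $\alpha \leq 1$, so the unique solution is $u = 0$, whereas for $\alpha > 1$ the chord inequality is reversed near the origin and, by continuity together with $\alpha \tanh u \to \alpha$ as $u \to \infty$, a symmetric pair of nonzero solutions $\pm u_\alpha$ exists.

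For the forward direction ($d \leq \sqrt{2}L$, i.e.\ $\alpha \leq 1$), uniqueness of the critical point combined with attainment of the global maximum forces $x^* = 0$ to be the maximizer. For the converse ($d > \sqrt{2}L$, i.e.\ $\alpha > 1$), I would compute $k''(0) = (d^2 - 2L^2)/L^4 > 0$, so the midpoint is a strict local minimum of $h$, and in particular is not globally optimal. The main obstacle is the borderline case $\alpha = 1$, where $k''(0) = 0$ and the second-order test is inconclusive; this case must be handled by the global concavity of $\tanh$ rather than a local Taylor expansion, which is precisely why the fixed-point reformulation is the right tool.
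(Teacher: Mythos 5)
Your proof is correct, and it takes a genuinely different route from the paper's. The paper works directly with $f'$ and $f''$: the forward direction is the same second-derivative test at the midpoint (your computation $k''(0) = (d^2-2L^2)/L^4$ is its logarithmic counterpart), but for the reverse direction the paper proves global optimality by showing $f'>0$ on $[y_1,x^*]$ and $f'<0$ on $[x^*,y_2]$, which it reduces to the elementary inequality $\frac{1+y}{1-y}\leq e^{2y}$ on $(-1,0]$ via the auxiliary function $g(y)=e^{2y}\frac{1-y}{1+y}$. You instead pass to $\log h$ and reduce the critical-point equation to the fixed-point problem $u=\alpha\tanh u$ with $\alpha=d^2/(2L^2)$, so that uniqueness of the critical point for $\alpha\leq 1$ follows from strict concavity of $\tanh$ on $(0,\infty)$ together with $\tanh'(0)=1$. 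Your version buys two things: it cleanly dispatches the borderline case $d=\sqrt{2}L$, where the paper's reverse direction is slightly loose (it invokes the strict inequality $(y_2-y_1)^2<2L^2$, and at equality $f''(x^*)=0$ so the local-max claim needs the monotonicity argument, which there only yields $f'\geq 0$); and it gives the full critical-point structure for free, namely the symmetric pair of maximizers bifurcating from the midpoint once $\alpha>1$, which the paper does not prove but which explains its Figure 2. One point to make explicit in a final write-up: the maximization in (\ref{eqn:2test_loc_1sample}) is over an interval $[a,b]$, so your parenthetical claim that the maximum over $\mathbb{R}$ agrees with the maximum over any interval containing $[y_1,y_2]$ deserves the one-line justification that $h'$ is positive to the left of $y_1$ and negative to the right of $y_2$ (the paper supplies exactly this observation, though only in the proof of Proposition 3).
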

\begin{proof}
Define $d_1 := \lVert x - y_1 \rVert, d_2 := \lVert x - y_2 \rVert$. In this setting, the objective function is
\begin{equation}
    f(x) = \frac{\sigma_0^4}{\sigma_0^2 + \sigma^2} \left( e^{-\frac{d_1^2}{L^2}} + e^{-\frac{d_2^2}{L^2}} \right).
\end{equation}
The derivative of $f(x)$ is:
\begin{equation} \label{eqn:derivative}
    f'(x) = \frac{-2}{L^2} \frac{\sigma_0^4}{\sigma_0^2 + \sigma^2} \left( (x-y_1) e^{-\frac{d_1^2}{L^2}} + (x-y_2) e^{-\frac{d_2^2}{L}} \right).
\end{equation}
In general, it is difficult to solve for the critical points using the derivative of the form in (\ref{eqn:derivative}) since it is a transcendental equation. One could resort to numerical methods to solve it. However, in this case, we identify the midpoint $x^* = \frac{y_1 + y_2}{2}$ as a critical point for the function i.e.\ $f'(x^*) = 0$.

The second derivative is given by:
\begin{equation}
\begin{split}
    f''(x) = \frac{-2}{L^2} \frac{\sigma_0^4}{\sigma_0^2 + \sigma^2} \bigg(e^{-\frac{d_1^2}{L^2}} \big( 1-\frac{2}{L^2} (x-y_1)^2 \big) \\
    + e^{-\frac{d_2^2}{L^2}} \big(1-\frac{2}{L^2} (x-y_2)^2\big)\bigg)
\end{split}
\end{equation}

Evaluating the second derivative at the critical point $x^*=\frac{y_1+y_2}{2}$ gives
\begin{equation}
\begin{split}
f''(x^*) = \frac{-4}{L^2} \frac{\sigma_0^4}{\sigma_0^2 + \sigma^2} e^{-\frac{1}{4L^2} (y_2-y_1)^2} 
\\
\left( 1 - \frac{1}{2L^2} ( y_2 - y_1 )^2\right)
\end{split}
\end{equation}

$\Rightarrow$
We will prove the forward direction by proving the contrapositive. When $(y_2 - y_1)^2 > 2L^2$, $f''(x^*) > 0$ and $x^*$ is a local minima and is not optimal.

$\Leftarrow$
Since $(y_2 - y_1)^2 < 2L^2$, we have that $f''(x^*) < 0$, and thus $x^*$ is a local maxima. 
To show $x^*$ is a global maximum, we show $f'(x) > 0$ on the interval $[y_1, x^*]$ and $f'(x) < 0$ on the interval $[x^*, y_2]$.

On the interval $[y_1, x^*]$, it is sufficient to show the following:
\begin{equation} \label{eqn:proof1}
    \frac{x-y_1}{y_2 - x} \leq \frac{e^{-\frac{d_2^2}{L^2} }}{e^{-\frac{d_1^2}{L^2} }},
\end{equation}
since this ensures $f'(x) > 0$.
Consider the RHS in (\ref{eqn:proof1}),
\begin{equation} \label{eqn:proof1_2nd}
    \begin{split}
    \frac{e^{-\frac{d_2^2}{L^2} }}{e^{-\frac{d_1^2}{L^2} }} &= e^{-\frac{1}{L^2} \left( (x-y_2)^2 - (x-y_1)^2 \right)}\\
    &= e^{-\frac{1}{L^2}(y_1-y_2)(2x-y_2-y_1)}
    = e^{-\frac{2}{L^2}(y_2-y_1)(\frac{y_1+y_2}{2} - x)}.
    \end{split}
\end{equation}
Since $(y_2-y_1)^2 < 2L^2$, it holds that $-\frac{2}{L^2} (y_2-y_1) > -\frac{4}{(y_2-y_1)}$. Continuing from (\ref{eqn:proof1_2nd}) gives
\begin{equation} \label{proof1:lb}
    \begin{split}
    e^{-\frac{2}{L^2}(y_2-y_1)(\frac{y_1+y_2}{2} - x)} &> e^{-\frac{4}{y_2-y_1}(\frac{y_1+y_2}{2} - x)}.
    \end{split}
\end{equation}
Now, we need to show that the LHS in (\ref{eqn:proof1}) is less than the lower bound in (\ref{proof1:lb}). Define $A:= \frac{y_1+y_2}{2}$, $B:=\frac{y_2-y_1}{2}$, and for any $x \in [y_1, x^*]$, define $Z := x - A$. Starting with the LHS gives
\begin{equation}
    \begin{split}
      \frac{x-y_1}{y_2 - x} &= \frac{x-y_1 + A - A}{y_2 - x + A - A}
      = \frac{x+B-A}{-x + A + B}\\
      &= \frac{Z + B}{-Z + B} = \frac{\frac{Z}{B} + 1}{-\frac{Z}{B} + 1}.
    \end{split}
\end{equation}
Consider the function $g(y) := e^{2y} \frac{1-y}{1+y}$. The derivative is $g'(y) = - \frac{2y^2 e^{2y}}{(1+y)^2}$ which shows the function is non-increasing for all $y \neq -1$. Then, since $g(0) = 1$, $g(y) \geq 1$ on the interval $(-1, 0]$. Then, we have for $y \in (-1, 0]$,
\begin{equation} \label{proof1:fn}
    \frac{y+1}{-y+1} \leq e^{2y}.
\end{equation}
Note that since $y_2 - y_1 \leq \sqrt{2}L$, for any $x \in [y_1, x^*]$, $-1 \leq \frac{Z}{B} \leq 0$. Setting $y = \frac{Z}{B}$ in (\ref{proof1:fn}) gives
\begin{equation}
      \frac{\frac{Z}{B} + 1}{-\frac{Z}{B} + 1} \leq e^{2 \frac{Z}{B}} = e^{-\frac{4}{y_2-y_1}(\frac{y_1+y_2}{2} - x)},
\end{equation}
which shows that the LHS in (\ref{eqn:proof1}) is less than the lower bound in (\ref{proof1:lb}). Thus, $f'(x) > 0$ and $f(x)$ is increasing on the interval $[y_1, x^*]$. Using similar arguments, one can show $f(x)$ is decreasing on the interval $[x^*, y_2]$. Combining this with the fact $x^*$ is a critical point and $f''(x^*) < 0$ implies $x^*$ is the global maximum.
\end{proof}

When the points are separated by a distance greater than $\sqrt{2} L$, Proposition \ref{prop:1sample_2loc} guarantees the suboptimality of the midpoint (see Figure~\ref{fig:optwL}). In this case, the prediction locations are reasonable solutions whose performance guarantee is given by the following proposition.

\begin{proposition} \label{prop:pred_greater_dist}
   Given $D \subset \mathbb{R}$, $\Omega = \{y_1, y_2\} \subset D, \Theta = D,$ and $k=1$, when $\lVert y_2 - y_1 \rVert > \sqrt{2} L$, the point $x=y_1$ is an approximate maximizer to (\ref{eqn:2test_loc_1sample}) with a guarantee
   \begin{equation}
       \frac{f(\{y_1\})}{f(\{x^*\})} \geq 0.62,
   \end{equation}
   where $x^*$ is the optimal measurement location.
\end{proposition}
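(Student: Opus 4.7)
The plan is to prove the bound by establishing an upper bound on $f(\{x^*\})$ and a lower bound on $f(\{y_1\})$, both expressed as a multiple of the common factor $\frac{\sigma_0^4}{\sigma_0^2+\sigma^2}$. Writing $\Delta := \lVert y_2 - y_1 \rVert > \sqrt{2}L$, direct evaluation of (\ref{eqn:2test_loc_1sample}) at $x = y_1$ gives
\begin{equation*}
f(\{y_1\}) = \frac{\sigma_0^4}{\sigma_0^2+\sigma^2}\bigl(1 + e^{-\Delta^2/L^2}\bigr) > \frac{\sigma_0^4}{\sigma_0^2+\sigma^2},
\end{equation*}
so the remaining task is to show $f(\{x^*\}) < \frac{\sigma_0^4}{\sigma_0^2+\sigma^2}\bigl(1 + e^{-1/2}\bigr)$. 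Since $1/(1+e^{-1/2}) \approx 0.6225 > 0.62$, this will close the proof.

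First I would restrict the search for $x^*$ to $[y_1, y_2]$: for any $x$ outside this interval, moving $x$ to the nearer endpoint strictly decreases both $\lVert x - y_1 \rVert$ and $\lVert x - y_2 \rVert$ and hence strictly increases $f$. Using the symmetry of $f$ about the midpoint, assume without loss of generality that $x^* \in [y_1, (y_1+y_2)/2]$ and set $d_1^* := x^* - y_1 \leq d_2^* := y_2 - x^*$. Proposition~\ref{prop:1sample_2loc} shows the midpoint is a strict local minimum when $\Delta > \sqrt{2}L$, so $x^*$ lies strictly in the interior of $[y_1, y_2]$ and differs from the midpoint; in particular, it satisfies $f'(x^*) = 0$.

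The heart of the argument is to extract a useful consequence of the critical-point equation. From (\ref{eqn:derivative}), $f'(x^*) = 0$ is equivalent to
\begin{equation*}
d_1^* \, e^{-(d_1^*)^2/L^2} = d_2^* \, e^{-(d_2^*)^2/L^2}, \qquad \text{i.e., } h(d_1^*) = h(d_2^*),
\end{equation*}
where $h(t) := t\,e^{-t^2/L^2}$. A quick derivative computation gives $h'(t) = e^{-t^2/L^2}(1 - 2t^2/L^2)$, so $h$ is unimodal on $[0,\infty)$ with unique maximum at $t = L/\sqrt{2}$. Since $d_1^* < d_2^*$ with $h(d_1^*) = h(d_2^*)$, the two distances must straddle this mode, yielding $d_1^* < L/\sqrt{2} < d_2^*$. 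Consequently $e^{-(d_2^*)^2/L^2} < e^{-1/2}$ and trivially $e^{-(d_1^*)^2/L^2} \leq 1$, which combined with (\ref{eqn:2test_loc_1sample}) gives $f(\{x^*\}) < \frac{\sigma_0^4}{\sigma_0^2+\sigma^2}\bigl(1 + e^{-1/2}\bigr)$, and dividing by the lower bound on $f(\{y_1\})$ finishes the argument.

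The main obstacle is the unimodality/straddling step: identifying $h(t) = t e^{-t^2/L^2}$ as the right auxiliary function and using its unique maximum at $t = L/\sqrt{2}$ to force $d_2^* > L/\sqrt{2}$ is what converts the transcendental first-order condition into a clean scalar bound. Once that is in hand, the proof reduces to dividing the two bounds and verifying the numerical value $1/(1 + e^{-1/2}) > 0.62$.
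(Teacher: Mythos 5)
Your proof is correct, and its skeleton is the same as the paper's: restrict $x^*$ to $[y_1,y_2]$, use symmetry to place $x^*$ in the left half, lower-bound $f(\{y_1\})$ by $\tfrac{\sigma_0^4}{\sigma_0^2+\sigma^2}$ by discarding the cross term, upper-bound $f(\{x^*\})$ by $\tfrac{\sigma_0^4}{\sigma_0^2+\sigma^2}(1+e^{-1/2})$, and take the ratio $1/(1+e^{-1/2})\approx 0.62$. The one place you diverge is in how you obtain the key inequality $d_2^* > L/\sqrt{2}$: you pass through the first-order condition $f'(x^*)=0$, rewrite it as $h(d_1^*)=h(d_2^*)$ with $h(t)=t e^{-t^2/L^2}$, and use the unimodality of $h$ (peak at $t=L/\sqrt{2}$) to force the two distances to straddle the mode. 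The paper gets the same inequality in one line: once $x^*\in[y_1,(y_1+y_2)/2]$, trivially $d_2^* \geq (y_2-y_1)/2 > L/\sqrt{2}$. So your straddling lemma is valid but redundant given the half-interval reduction you had already made; it also carries a small burden the direct route avoids, namely justifying that $x^*$ is an interior critical point distinct from the midpoint (your appeal to Proposition~\ref{prop:1sample_2loc} handles the midpoint, but interiority really follows from checking the sign of \eqref{eqn:derivative} at the endpoints, not from the midpoint being a local minimum). On the other hand, your argument yields the extra structural fact $d_1^* < L/\sqrt{2} < d_2^*$ for the true optimizer, which is slightly sharper information than the paper extracts, even though it is not needed for the stated bound.
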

\begin{proof}
Define $d_1 := \lVert x - y_1 \rVert, d_2 := \lVert x - y_2 \rVert$. In this setting, the objective function is
\begin{equation}
    f(x) = \frac{\sigma_0^4}{\sigma_0^2 + \sigma^2} \left( e^{-\frac{d_1^2}{L^2}} + e^{-\frac{d_2^2}{L^2}} \right).
\end{equation}
The derivative of $f(x)$ is:
\begin{equation} 
    f'(x) = \frac{-2}{L^2} \frac{\sigma_0^4}{\sigma_0^2 + \sigma^2} \left( (x-y_1) e^{-\frac{d_1^2}{L^2}} + (x-y_2) e^{-\frac{d_2^2}{L}} \right).
\end{equation}
For $x < y_1$, $f'(x)$ is positive and for $x > y_2$, $f'(x)$ is negative. Thus, the optimal solution $x^*$ must lie within the interval $[y_1, y_2]$. Since $\lVert y_1 - y_2 \rVert > \sqrt{2} L $, $x = \frac{y_1 + y_2}{2}$ is a local minima (Proposition \ref{prop:1sample_2loc}). The function is symmetric around the midpoint, so we restrict our discussion to the interval $[y_1, \frac{y_1+y_2}{2}]$.
A lower bound for the solution $x = y_1$ is constructed by assuming $e^{-\frac{\lVert y_1 - y_2 \rVert^2}{L^2}} = 0$. Thus, $f(y_1) \geq \frac{\sigma_0^2}{\sigma_0^2 + \sigma^2}$. Since the optimal solution $x^* \in [y_1, \frac{y_1+y_2}{2}]$ and $\lVert y_1 - y_2 \rVert \geq \sqrt{2} L$, an upper bound can be constructed: $f(x^*) < \frac{\sigma_0^2(1 + e^{-0.5})}{\sigma_0^2 + \sigma^2}$. Thus,
\begin{equation}
    \frac{f(y_1)}{f(x^*)} \geq \frac{1}{1+e^{-0.5}} \approx 0.62.
\end{equation}
\end{proof}
Propositions \ref{prop:1sample_2loc} and \ref{prop:pred_greater_dist} motivate our algorithm design. For two prediction points and one sample in 1D, either the midpoint is optimal or either prediction point is an approximate solution. This suggests the following idea: restrict the search of the greedy algorithm to the prediction locations and the centroids of the cliques formed by the prediction locations.

\begin{figure}[t]
    \centering
    \includegraphics[width=0.48\textwidth]{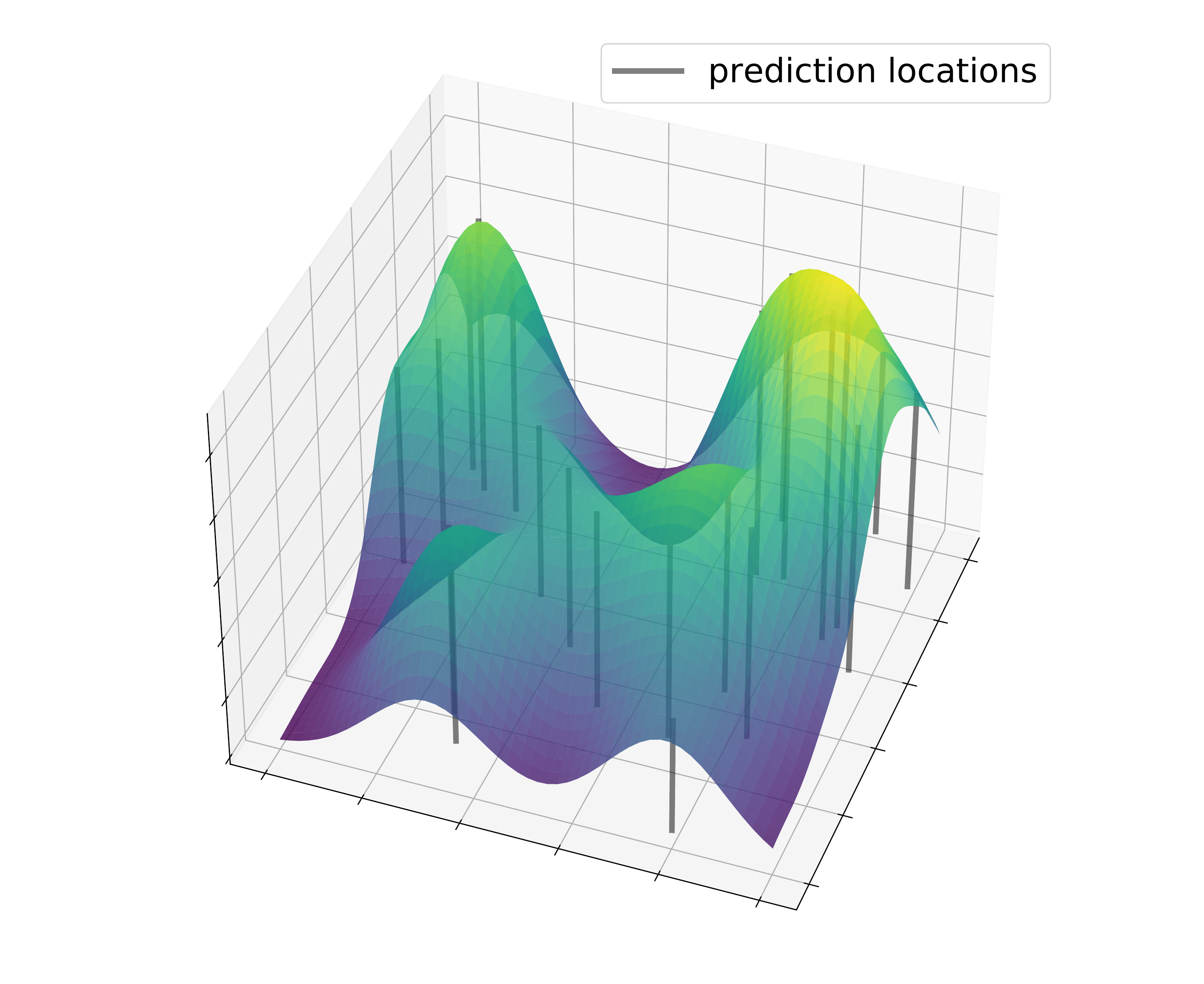}
    \caption{A plot of the objective $f(S)$ in Problem 1 when the random variables are associated with a two-dimensional space and the budget is one i.e.\ $t = 1$. The function is non-concave and has many maxima.}
    \label{fig:nonconcave_obj}
\end{figure}
\section{Algorithm} \label{sec:alg}
In this section, we discuss \ggr\ and its limitations, our proposed algorithm \igr\ based on computing centroids of maximal cliques, and provide a reformulation of computing the marginal gains that speeds up the implementation of both greedy algorithms in practice.

\subsection{\ggr} \label{subsec:complexity_greedy}
The greedy algorithm is popular for subset selection in regression where it is also known as Forward Selection \cite{das2008algorithms,miller2002subset,hastie01statisticallearning}. In this section, we discuss how the greedy algorithm can be used for infinite observation sets. For Problem 1, starting with $S_0 = \emptyset$, the first step of the algorithm computes the maximizer to
\begin{equation}
    \begin{split}
        S_{1} &= \underset{x \in \Theta}{\text{arg max}}\ f(\{x\})\\
        &= \underset{x \in \Theta}{\text{arg max}}\ \frac{1}{\sigma^2 + \sigma_0^2} \sum_{y \in \Omega} \phi_\text{SE}^2(\lVert x - y \rVert)\\
        &= \underset{x \in \Theta}{\text{arg max}}\ \frac{\sigma_0^4}{\sigma^2 + \sigma_0^2} \sum_{y \in \Omega} e^{-\frac{1}{L^2} \lVert x - y \rVert^2}.
    \end{split}
\end{equation}
This function is non-concave and in general, it is difficult to find the global maximum. A plot of this objective when the set of observation locations is a subset of the two dimensional Euclidean space i.e.\ $\Theta \subset \mathbb{R}^2$ is shown in Figure~\ref{fig:nonconcave_obj}.

To tackle this non-concave maximization problem, the set of measurement locations $\Theta$ can be uniformly discretized to form a finite set of points $\bar{\Theta} \subset \Theta$. The point $x \in \bar{\Theta}$ with the maximum function value is returned as an approximate solution. This is known as the Uniform Grid method \cite{nesterov2018lectures}. Each step of the greedy algorithm can be approximately solved using this method. The grid discretization is determined by a positive integer parameter $\rho \geq 1$ which tiles each dimension with $\rho$ points to form $\bar{\Theta}$ of size $\rho^d$. We refer to this method as \ggr. The time complexity of \ggr\ is given in the following proposition.

\begin{proposition} \label{prop:greedy_time_complexity}
    Given a positive integer $\rho \geq 1$ and a grid discretization of size $\rho^d$, \ggr\ finds a solution to Problem 1 in time $O(\rho^d k^3\max\{k, |\Omega|\})$.
\end{proposition}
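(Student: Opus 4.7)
The plan is a straightforward runtime accounting: show that Grid-Greedy runs $k$ greedy rounds; each round evaluates the candidate function $f(S_{i-1} \cup \{x\})$ at each of the $\rho^d$ grid points in $\bar{\Theta}$; and each such evaluation costs $O(i^2 \max\{i, |\Omega|\})$ using the definition of $f$ in \eqref{eqn:obj_defns}. Summing yields the claim.

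I would first isolate the cost of a single marginal-gain query. Fix a round $i \in \{1,\ldots,k\}$, a running set $S_{i-1}$ of size $i-1$, and a candidate $x \in \bar{\Theta}$. Let $T = S_{i-1} \cup \{x\}$, so $|T| = i$. Building the covariance matrix $C(T) \in \mathbb{R}^{i \times i}$ costs $O(i^2)$ kernel evaluations, and inverting (or Cholesky-factorizing) $C(T)$ costs $O(i^3)$. Given $C(T)^{-1}$, evaluating $f_y(T) = \boldsymbol{b}_y(T)^T C(T)^{-1} \boldsymbol{b}_y(T)$ for a single $y \in \Omega$ costs $O(i)$ to assemble $\boldsymbol{b}_y(T)$ and $O(i^2)$ for the quadratic form, and this must be done for each of the $|\Omega|$ prediction points. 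The total cost of one evaluation is therefore $O(i^3 + |\Omega|\, i^2) = O(i^2 \max\{i, |\Omega|\})$.

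Next I would aggregate across candidates and rounds. Round $i$ performs $\rho^d$ such evaluations (one per grid point) to choose the argmax and pays an additional $O(\rho^d)$ for the comparison, giving round cost $O(\rho^d\, i^2 \max\{i, |\Omega|\})$. Summing,
\begin{equation}
\sum_{i=1}^{k} \rho^d\, i^2 \max\{i, |\Omega|\} \;\leq\; \rho^d \max\{k, |\Omega|\} \sum_{i=1}^{k} i^2 \;=\; O(\rho^d\, k^3 \max\{k, |\Omega|\}),
\end{equation}
which is the stated bound.

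There is essentially no obstacle here beyond bookkeeping; the only subtlety to call out is that the $\max$ in the bound arises because in general one cannot assume $|\Omega| \geq k$ (the regime of interest, per the Remark after Problem 1) nor $|\Omega| \leq k$, so the per-round work is dominated by $\max\{i, |\Omega|\}$ rather than by $|\Omega|$ alone. I would also note (without using it, since it does not improve the stated bound) that rank-one updates via the Schur complement or block-inversion identity could amortize the $i^3$ factor across rounds; the proof as given uses the naive inversion cost because it already matches the proposition.
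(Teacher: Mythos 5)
Your proof is correct and follows essentially the same accounting as the paper: $k$ rounds, $\rho^d$ evaluations per round, and an $O(\max\{k^3, |\Omega|k^2\})$ cost per evaluation from the matrix inversion plus $|\Omega|$ quadratic forms. The only difference is that you sum the per-round costs over $i$ while the paper simply bounds every round by the final-round cost; both yield the identical bound.
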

\begin{proof}
\ggr\ runs for $k$ iterations with $\rho^d$ function evaluations per iteration. For a set $S$ of size $k$, the evaluation of $f(S)$ requires the inversion of a $k \times k$ matrix and $|\Omega|$ matrix multiplications, with each multiplication taking time $O(k^2)$. Thus, the overall time complexity for the evaluation of $f(S)$ is $O(\max\{k^3, |\Omega| k^2\})$. Then, \ggr\ runs in time $O(\rho^d k \max\{k^3, |\Omega| k^2 \}) = O(\rho^d k^3 \max\{k, |\Omega|\})$.
\end{proof}

The dependence of the runtime on $\rho^d$ is concerning. To get good quality solutions using the greedy algorithm, $\rho$ needs to be sufficiently large to achieve a good grid resolution. In this paper, we aim to find good quality solutions using the greedy algorithm in time \textit{independent} of the grid discretization.

\subsection{\igr}
In this section, we present our algorithm \igr\ which involves two parts. First, we find the centroids of maximal cliques in a graph with nodes as prediction locations. Second, we use the set of centroids and prediction locations as a ground set for the greedy algorithm for maximizing set functions (Definition \ref{def:greedyalg}) to solve Problem 1.
\begin{algorithm}[t]
	\DontPrintSemicolon 
	\KwIn{Prediction locations $\Omega$}
	\KwOut{Clique Centroids $\mathcal{X} \subset \Theta$}		
	$G=(V,E) \leftarrow \textsc{constructGraph}(\Omega)$\\
	$\mathcal{C} \leftarrow \textsc{maximalCliques}(G)$\\
	Initialize $\mathcal{X} = \emptyset$\\
	\For{each clique $\mathcal{M} \in \mathcal{C}$}{
	$\mathcal{X} \leftarrow \mathcal{X} \cup \left\{\textsc{centroid}(\mathcal{M})\right\}$
	}
	\Return{$\mathcal{X}$}
	\caption{\textsc{MaximalCliqueCentroids}}
	\label{alg:maximal_cliques}
\end{algorithm}

\subsubsection*{Finding Clique Centroids}
The steps to compute clique centroids is given in Algorithm~\ref{alg:maximal_cliques}. The first step (Line 1, $\textsc{constructGraph}$) constructs a graph $G = (V,E)$ with vertices as prediction locations. Two vertices are connected with an edge if the corresponding prediction locations are within a distance $\sqrt{2} L$. An example of a constructed graph for a two dimensional problem is shown in Figure~\ref{fig:fig7}. The next step is to compute the clique centroids. Ideally, we would like to find maximum cliques in the graph. Unfortunately, finding maximum cliques is NP-Hard \cite{cormen2009introduction}. We limit ourselves to finding \textit{maximal} cliques from each vertex in the graph since this can be done efficiently with a greedy algorithm: for each vertex $v \in V$ in the graph, grow the clique one vertex at a time by looping through the remaining vertices, add it to the clique if it is adjacent to every vertex in the clique and discard it otherwise (Line 2, $\textsc{maximalCliques}$). Note, this method does not yield \textit{all} maximal cliques like the Bron-Kerbosch algorithm \cite{bron1973algorithm}, which has an exponential time complexity in the worst case. Once we have the set of maximal cliques, the final step is to loop through the cliques and compute the centroid of the prediction locations associated with the clique (Line 5).

\begin{figure}[t]
    \centering
    \includegraphics[width=0.47\textwidth]{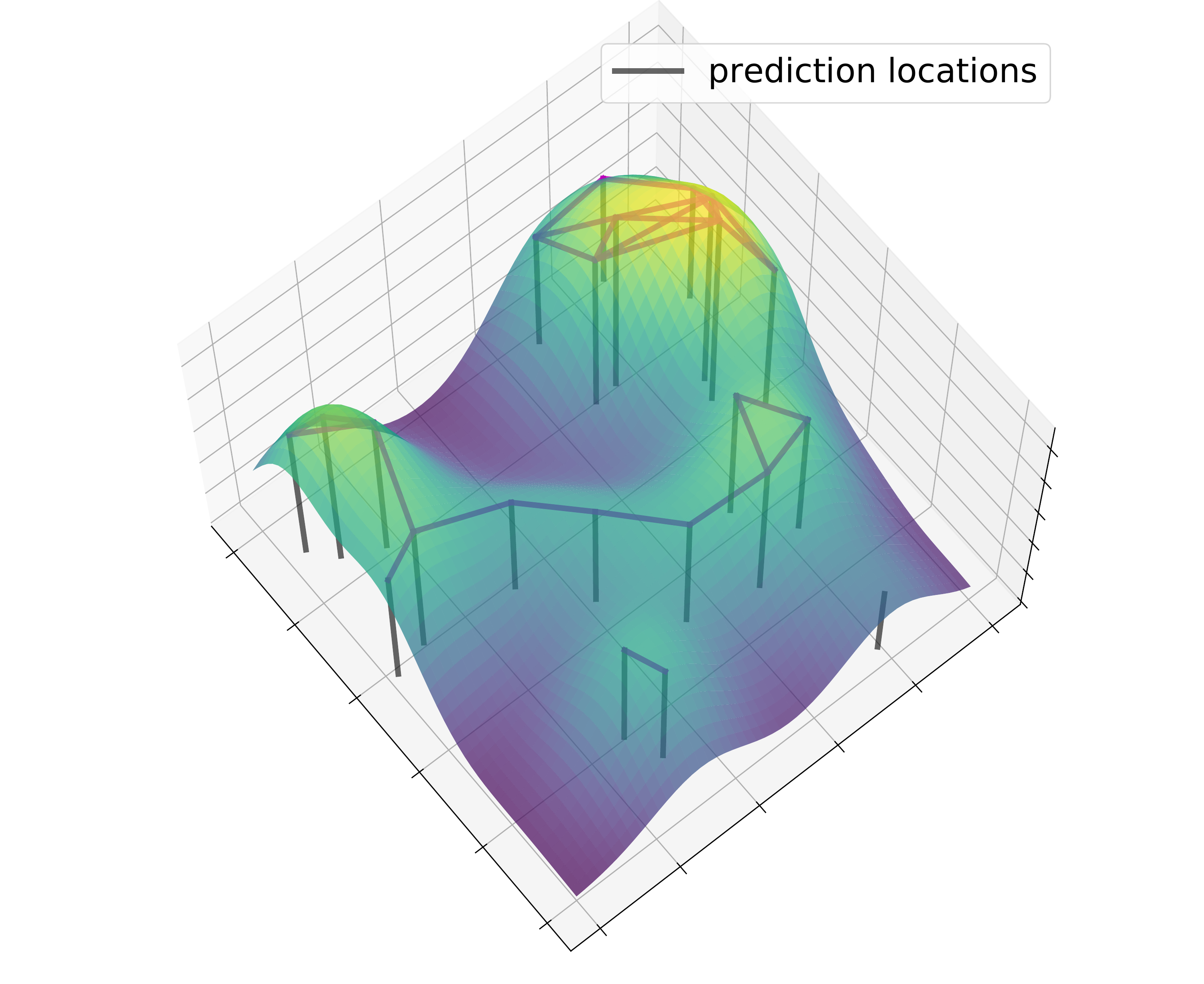}
    \caption{The objective function $f(S)$ when the budget $k=1$ for a given set of prediction locations in two dimensions. Two prediction locations are connected by an edge if the distance between them is less than or equal to $\sqrt{2}L$.}
    \label{fig:fig7}
\end{figure}

\subsubsection*{\igr}
Proposition 3 ensures that the prediction locations are reasonable approximate solutions when the prediction locations are separated by a distance greater than $\sqrt{2} L$. Instead of \ggr\ which has a runtime of $O(\rho^d k^3 \max\{k, |\Omega|\})$ for Problem 1 (see Proposition \ref{prop:greedy_time_complexity}), we remove the dependence on $\rho^d$ i.e.\ the grid discretization, by limiting the search to the set of centroids (computed in Algorithm~\ref{alg:maximal_cliques}) and the set of prediction locations: $\mathcal{X} \cup \Omega$. The set of centroids is a feasible set for Problem 1 since $\Omega \subset \Theta$ and $\Theta$ is a convex set i.e. the set of measurement locations $\Theta$ contains the centroids of any subset of prediction locations. Since the number of maximal cliques computed by Algorithm~\ref{alg:maximal_cliques} is bounded above by the number of prediction locations, the runtime of \igr\ is $O(|\Omega| k^3 \max\{k, |\Omega|\})$. This is an improvement over the runtime $O(\rho^d k^3 \max\{k, |\Omega|\})$ of \ggr, as long as $|\mathcal{X} \cup \Omega| < \rho^d$, which we will show in Section \ref{sec:experiments}, is required for \ggr\ to obtain good solutions for large fields. The steps for \igr\ are given in Algorithm~\ref{alg:lazy_greedy}.
\begin{algorithm}	
	\DontPrintSemicolon 
	\KwIn{Continuous Field: $\Theta$, Prediction Locations: $\Omega$, budget $k > 0$}
	\KwOut{Measurement Set: $S \subset \Theta$, $|S| = k$}		
	$\mathcal{V} = \textsc{MaximalCliqueCentroids}(\Omega)$\\
	Initialize $S_0 = \emptyset$\\
	\For{$i = 1\ \text{to}\ k$}{
    $S_{i} = S_{i-1} \cup \{\underset{x \in \mathcal{V}}{\text{arg max}}\ f(S_{i-1} \cup \{x\}) - f(S_{i-1})\}$
	}
	\Return{$S_k$}
	\caption{\igr}
	\label{alg:lazy_greedy}
\end{algorithm}

\subsection{Implementation of the Greedy Algorithm}
Each step of the greedy algorithm requires computing the maximizer of the marginal gain $f(S_i \cup \{x\}) - f(S_i)$ over all feasible $x$. Computing $f(S)$ in the form in (\ref{eqn:obj_defns}) is time consuming and is not amenable to vectorization in NumPy \cite{harris2020array} directly. Using Proposition \ref{prop:marginal}, the marginal can be rewritten in a form that can be vectorized, and in practice is much faster to compute. For example, computing the solution for 500 prediction points, ground set size of $400$, and a budget of $25$ takes $\approx 0.5$ seconds with vectorization and $\approx 14$ seconds with the non-vectorized version.

\begin{proposition} \label{prop:marginal} The marginal improvement of $f(S)$ when adding an element $x$ to a set $A$ is given by:
\begin{equation}
f(A \cup \left\{x\right\}) - f(A) = T_x \sum_{y \in \Omega} \left(R_{x,y}\left(A\right) - \phi_{\text{SE}}^2(x-y)\right)^2,
\end{equation}
where $T_x = \left(\sigma_0^2 + \sigma^2 - \boldsymbol{b}_{x}^T\left(A\right) C\left(A\right)^{-1} \boldsymbol{b}_{x}\left(A\right) \right)^{-1}$ and $R_{x,y}\left(A\right) = \boldsymbol{b}_{x}(A)^T C\left(A\right)^{-1} \boldsymbol{b}_{y}\left(A\right)$.
\end{proposition}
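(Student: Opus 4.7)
The plan is a direct matrix-algebra calculation based on the Schur-complement formula for the inverse of a block matrix. First, I would note that adding a single element $x$ to $A$ augments the covariance matrix $C(A)$ by one row and one column, so
\[
C(A\cup\{x\}) = \begin{bmatrix} C(A) & \boldsymbol{b}_x(A) \\ \boldsymbol{b}_x(A)^T & \sigma_0^2+\sigma^2 \end{bmatrix},
\qquad
\boldsymbol{b}_y(A\cup\{x\}) = \begin{bmatrix} \boldsymbol{b}_y(A) \\ \phi_{\text{SE}}(\lVert x-y\rVert) \end{bmatrix},
\]
using $\phi_{\text{SE}}(0)=\sigma_0^2$ and the noise variance in the diagonal entry. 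The Schur complement of $C(A)$ in this block matrix is precisely the scalar $s := \sigma_0^2+\sigma^2 - \boldsymbol{b}_x(A)^T C(A)^{-1}\boldsymbol{b}_x(A) = T_x^{-1}$, and the standard block-inverse identity yields
\[
C(A\cup\{x\})^{-1} = \begin{bmatrix} C(A)^{-1} + T_x\,C(A)^{-1}\boldsymbol{b}_x(A)\boldsymbol{b}_x(A)^T C(A)^{-1} & -T_x\,C(A)^{-1}\boldsymbol{b}_x(A) \\ -T_x\,\boldsymbol{b}_x(A)^T C(A)^{-1} & T_x \end{bmatrix}.
\]

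Next, I would plug this block inverse into the quadratic form $f_y(A\cup\{x\}) = \boldsymbol{b}_y(A\cup\{x\})^T C(A\cup\{x\})^{-1}\boldsymbol{b}_y(A\cup\{x\})$ and expand. Writing $\phi := \phi_{\text{SE}}(\lVert x-y\rVert)$ and using the symmetry of $C(A)^{-1}$, the four block-products collapse to
\[
f_y(A\cup\{x\}) = f_y(A) + T_x\bigl(\boldsymbol{b}_x(A)^T C(A)^{-1}\boldsymbol{b}_y(A)\bigr)^2 - 2T_x\,\phi\,\boldsymbol{b}_x(A)^T C(A)^{-1}\boldsymbol{b}_y(A) + T_x\phi^2.
\]
The last three terms form a perfect square, giving
\[
f_y(A\cup\{x\}) - f_y(A) = T_x\bigl(R_{x,y}(A) - \phi_{\text{SE}}(\lVert x-y\rVert)\bigr)^2,
\]
where I have used the definition $R_{x,y}(A) = \boldsymbol{b}_x(A)^T C(A)^{-1}\boldsymbol{b}_y(A)$.

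Finally, I would sum the per-point marginals over $y\in\Omega$, pull the constant $T_x$ outside the sum, and obtain the claimed identity. The bulk of the work is bookkeeping in the block-inverse expansion; the only step that requires any cleverness is recognizing the resulting three-term expression as a perfect square, which is where the compact form of the marginal comes from. The main obstacle, such as it is, is simply keeping the Schur-complement algebra tidy and making sure the sign conventions in the off-diagonal blocks are handled consistently; once the expansion is written out, the simplification is automatic.
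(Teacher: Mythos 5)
Your proof is correct and follows essentially the same route as the paper's: partition $C(A\cup\{x\})$ and $\boldsymbol{b}_y(A\cup\{x\})$, invert via the Schur complement, expand the quadratic form, and collapse the three cross terms into a perfect square before summing over $\Omega$. Your derivation (like the paper's own proof) actually yields $\phi_{\text{SE}}(\lVert x-y\rVert)$ rather than $\phi_{\text{SE}}^2$ inside the square, which indicates the exponent in the proposition statement is a typo; your handling of $T_x$ as the \emph{inverse} of the Schur complement is also the consistent reading.
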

\begin{proof}
We can partition the covariance matrix $C(A \cup \{x\})$ as follows:
\begin{equation}
    C(A \cup \{x\}) = \begin{bmatrix} 
    C(A) & \boldsymbol{b}_x(A)\\
    \boldsymbol{b}_x(A)^T & \sigma_0^2 + \sigma^2
    \end{bmatrix}
\end{equation}

Define $T_x:= \sigma_0^2 + \sigma^2 - \boldsymbol{b}_x(A)^T C(A)^{-1} \boldsymbol{b}_x(A)$. Using block matrix inversion,
\begin{equation}
    \begin{split}
    C&(A \cup \{x\}) = \\
    &\left[\begin{smallmatrix}
        C(A)^{-1} + C(A)^{-1} \boldsymbol{b}_x(A) T_x \boldsymbol{b}_x(A)^T C(A)^{-1} & -C(A)^{-1} \boldsymbol{b}_x(A) T_x\\
        -T_x \boldsymbol{b}_x(A)^T C(A)^{-1} & T_x
    \end{smallmatrix}\right]
    \end{split}
\end{equation}

Consider the objective
\begin{equation}
    f(A \cup \{x\}) = \sum_{y \in \Omega} \boldsymbol{b}_y(A \cup \{x\}) C(A \cup \{x\})^{-1} \boldsymbol{b}_y(A \cup \{x\}).
\end{equation}

We can partition $\boldsymbol{b}_y(A \cup \{x\})$ as follows:
\begin{equation}
    \boldsymbol{b}_y(A \cup \{x\}) = \begin{bmatrix}
        \boldsymbol{b}_y(A)\\
        \phi_\text{SE}(x-y)
    \end{bmatrix}
\end{equation}

Define $R_{x,y}\left(A\right) = \boldsymbol{b}_{x}(A)^T C\left(A\right)^{-1} \boldsymbol{b}_{y}\left(A\right)$. Then, plugging in the required quantities in the objective and performing the vector-matrix multiplications results in
\begin{equation}
    \begin{split}
        f(A \cup \{x\}) &= f(A) +\sum_{y \in \Omega} \Big(T_x R_{x,y}(A)^T R_{x,y}(A) \\
        &- 2\phi_\text{SE}(x-y) T_x R_{x,y}(A)^T \\
        &+ \phi^2_\text{SE}(x-y) T_x \Big)\\
        f(A \cup \{x\}) - f(A) &= T_x \sum_{y \in \Omega} \left( R_{x,y}(A) - \phi_\text{SE}(x-y)\right)^2.
    \end{split}
\end{equation}
\end{proof}
\section{Evaluation} \label{sec:experiments}
In this section, we provide evidence of two advantages of \igr\ over \ggr. First, \igr\ obtains higher quality solutions on problem instances where the run time of both algorithms is comparable. Second, on instances where the solution quality is comparable, \igr\ finds the solution faster than \ggr. The solution quality is measured by the mean squared error (Equation \ref{eqn:initial_mse}) and the run time is measured in seconds.

\subsubsection*{Experimental Setup}
We follow the setup in \cite{suryan2020learning} where a Gaussian Process was fit to a real world dataset of organic matter measurements in an agricultural field \cite{mulla2000evaluation}. Note that we do not require the actual data, only the parameters of the squared exponential covariance function and the variance of the measurement noise. Specifically, the authors \cite{suryan2020learning} computed $L = 8.33$ meters, $\sigma_0 = 12.87$, and $\sigma^2 = 0.0361$. The interpretation of $L$ is the distance one has to travel before the underlying function value changes \cite{rasmussen2003gaussian}. Since the covariance function is a squared exponential, only the relative distances between points matter, not the absolute positions. This enables us to consider different environment sizes:
\begin{enumerate}
\item $D_{\text{small}} := \left\{(x,y) \in \mathbb{R}^2: 0 \leq x \leq 40, 0 \leq y \leq 40\right\}$, $\text{Area} = 1600$ square meters.
\item $D_{\text{med}} := \left\{(x,y) \in \mathbb{R}^2: 0 \leq x \leq 120, 0 \leq y \leq 120\right\}$, $\text{Area} = 14,400$ square meters.
\item $D_{\text{large}} := \left\{(x,y) \in \mathbb{R}^2: 0 \leq x \leq 600, 0 \leq y \leq 600\right\}$, $\text{Area} = 360,000$ square meters.
\end{enumerate}
We also consider three regimes for the number of prediction points: sparse (20 points, budget 8), moderate (300 points, budget 75), and dense (1000 points, budget 200). The results in the following sections are based on 10 randomly generated problem instances for each combination of environment type (small, medium, large) and prediction point regime (sparse, moderate, dense). The experiments are implemented using NumPy \cite{harris2020array} on an AMD Ryzen 7 2700 processor.

\begin{figure}[t]
    \centering
    \includegraphics[width=0.48\textwidth]{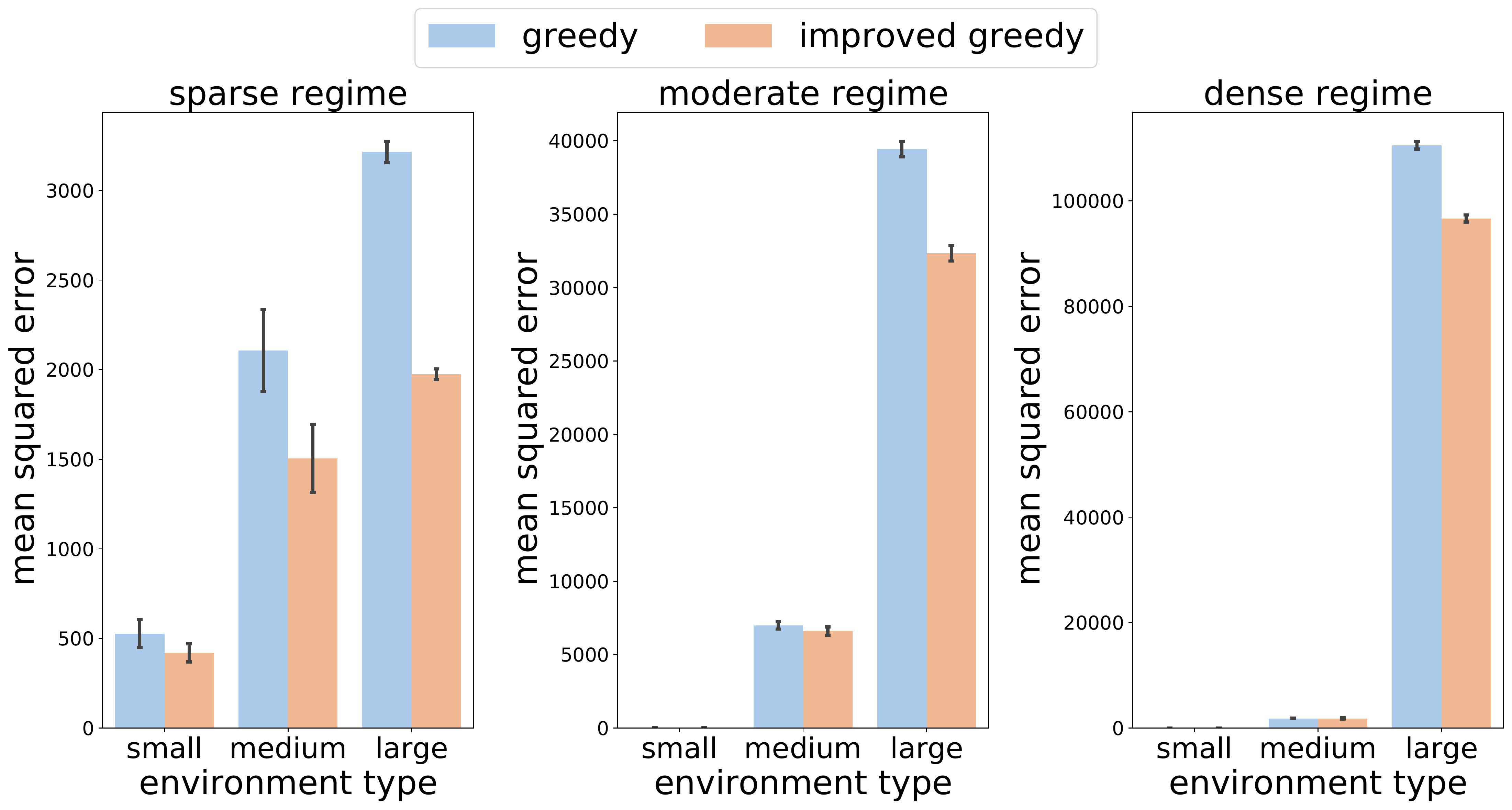}
    \caption{Comparison of the solution quality while keeping the run time approximately the same. \igr\ obtains equal or better solutions \ggr\ in all environment types and regime of prediction points.}
    \label{fig:fig14}
\end{figure}

\begin{figure}[t]
    \centering
    \includegraphics[width=0.48\textwidth]{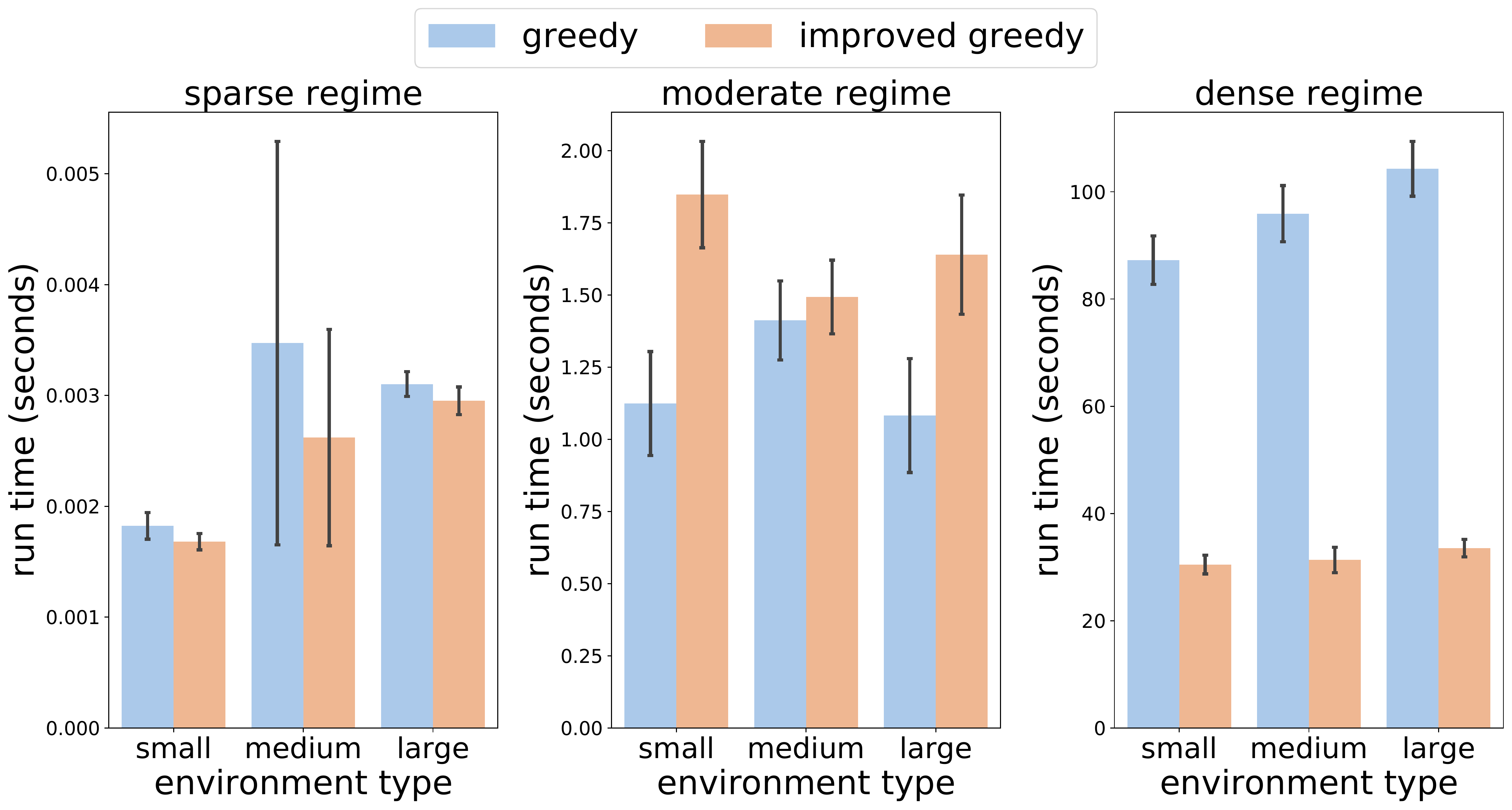}
    \caption{Comparison of the run time while keeping the solution quality approximately the same. \ggr\ practically takes at least as much time as \igr\ to find solutions of similar quality.}
    \label{fig:fig15}
\end{figure}

\subsection{Solution Quality} \label{subsec:exp1}
In the first set of experiments, we aim to answer the following question: given the same computational resources, which algorithm provides a better solution? To ensure equal computational resources, for a $N \times N$ grid discretization, we set $N = \left\lceil \sqrt{2 |\Omega|} \right\rceil$. Since the number of maximal cliques computed in Algorithm~\ref{alg:maximal_cliques} is at most $|\Omega|$, this ensures the runtimes are comparable. The grids selected are: $7 \times 7$ (sparse regime), $25 \times 25$ (moderate regime), $45 \times 45$ (dense regime).

The results are shown in Figure~\ref{fig:fig14}. In the sparse regime (left plot) \igr\ outperforms \ggr\ on average in all environment types. The difference in performance is the highest in large environments since the grid resolution is not sufficient to cover the space. In the moderate regime (center plot) and dense regime (right plot), the solution quality of both algorithms is similar in small and medium sized environments. However, for large environments, \igr\ obtains better solutions. The difference in performance reduces as we move from the sparse to dense regime. This is because the high density of prediction points increases the chance of close proximity with grid points, even in the case of low resolution grids. In summary, using a similar amount of computational resources, \igr\ obtains solutions of equal or higher quality than \ggr\ across all environment sizes and regimes on the number of prediction points.

\subsection{Run Time} \label{subsec:exp2}
In the second set of experiments, we aim to answer the following question: how much longer does it take for \ggr\ to achieve similar solution quality as \igr? For each problem instance, if \igr\ obtains a higher objective value than \ggr\, we repeatedly increase the grid resolution until \ggr\ attains the a similar objective value. We compare the time taken by \ggr\ on the final grid resolution to the time taken by \igr.

The results are shown in Figure~\ref{fig:fig15}. In the sparse regime (left plot) and moderate regime (center plot), the run times are practically the same. In the moderate regime, the number of prediction points is a bit higher than the number of grid points which is the reason for the slightly higher  runtime of \igr. However, in the dense regime, \ggr\ takes approximately 2.5 times (small environments), 4 times (medium size environments), and 5 times (large environments) as long as \igr\ to attain a similar objective value. The runtime of \ggr\ increases with the size of the environment, while the runtime of \igr\ remains fairly constant. Note, while the run times in our experiments seem feasible in practice, the fields considered in these experiments are small compared to average farm sizes. For example, in 2019, the average farm size in USA was 444 acres \cite{usda2019} which is 5 times the size of the largest environment considered here. We expect larger reductions in run time for these agricultural fields in practice. In summary, \igr\ finds solutions of similar quality to \ggr\ more efficiently i.e.\ using less or equal amounts of time, across all environments and regimes on the number of prediction points.

\section{Conclusions}
We discussed the problem of selecting a $k$-subset that yields the best linear estimate at a set of prediction locations in a continuous spatial field. One approach is to solve the problem using a grid discretization of the field and greedily select $k$ measurement locations. However, this can be computationally expensive for large fields. Instead, we restricted the search of the greedy algorithm to the set of prediction locations and the centroids of their cliques. This was motivated by identifying a critical distance between two prediction points which characterized the optimal solution in 1D. In simulations, we showed the effectiveness of the proposed approach in terms of solution quality and runtime.






\Urlmuskip=0mu plus 1mu
\bibliographystyle{IEEEtran}
\bibliography{IEEEabrv,mybib}

\end{document}